\newcommand{\Real}{\mathbb{R}}
\newcommand{\Realn}{\mathbb{R}^n}
\newcommand{\Toron}{\mathbb{T}^n}
\newcommand{\Zn}{\mathbb{Z}^n}
\newcommand{\N}{\mathbb{N}}
\newcommand{\PV}{\operatorname{P.V.}}
\newcommand{\Dom}{\operatorname{Dom}}
\newcommand{\s}{\sigma}
\renewcommand{\d}{\mathrm{dist}}
\newtheorem{thm}{Theorem}[section]
\newtheorem{ThA}{Theorem}
\newtheorem{lem}[thm]{Lemma}
\theoremstyle{definition}
\newtheorem{rem}[thm]{Remark}
\numberwithin{equation}{section}
\author[L. Roncal]{Luz Roncal}
\address{Departamento de Matem\'aticas y Computaci\'on\\
Universidad de La Rioja\\
26004 Logro\~no, Spain}
\email{luz.roncal@unirioja.es}
\author[P. R. Stinga]{Pablo Ra\'ul Stinga}
\address{Department of Mathematics\\
The University of Texas at Austin\\
1 University Station C1200\\
78712-1202 Austin, TX\\
United States of America}
\email{stinga@math.utexas.edu}
\thanks{The first author was partially supported by grant MTM2012-36732-C03-02 from Spanish Government. The second author was partially supported by MTM2011-28149-C02-01 from Spanish Government}
\keywords{Fractional Laplacian, transference, Harnack inequality, extension problem, H\"older regularity}
\subjclass[2010]{Primary: 35R11, 35B65. Secondary: 26A33, 47G20}
\begin{document}

\title[Transference of fractional Laplacian regularity]{Transference of fractional Laplacian regularity}

\begin{abstract}
In this note we show how to obtain regularity estimates for the fractional Laplacian on the multidimensional
torus $\Toron$ from the fractional Laplacian on $\Realn$. Though at first glance this may seem
quite natural, it must be carefully precised. A reason for that is the simple fact that $L^2$ functions on the torus
can not be identified with $L^2$ functions on $\Realn$. The transference is achieved through a formula
that holds in the distributional sense.
Such an identity allows us to transfer Harnack inequalities,
to relate the extension problems, and to obtain pointwise formulas and H\"older regularity estimates.
\end{abstract}

\maketitle

\section{The transference formula}

For $0<\sigma<1$ and $u:\Realn\to\Real$, the fractional Laplacian of order $\sigma$
 in $\Realn$ is defined using the Fourier transform as
$$(-\Delta_{\Realn})^\sigma u(x)=\int_{\Realn}|\xi|^{2\sigma}\widehat{u}(\xi)e^{i x
\cdot\xi}\,d\xi,\quad x\in\Realn.$$
Similarly, the fractional Laplacian on $\Toron\equiv\Realn/(2\pi\Zn)$ is defined via the
multiple Fourier series
$$(-\Delta_{\Toron})^\sigma v(z)=\sum_{k\in \Zn}|k|^{2\sigma}c_k(v)e^{ik\cdot z},$$
where $c_k(v)$ is the Fourier coefficient of $v:\Toron\to\Real$. In our notation,
the point $(e^{iz_1},\ldots,e^{iz_n})\in\Toron$ is uniquely identified with $z=(z_1,\ldots,z_n)\in Q_n
\coloneqq(-\pi,\pi]^n$, so $v(z)$ in fact means $v(e^{iz_1},\ldots,e^{iz_n})$.
In order to avoid a rather cumbersome notation, we will just write $z\in \Toron$.

It is clear that the fractional Laplacian on
$\Realn$ does not preserve the Schwartz class $\mathcal{S}$. Instead,
$$(-\Delta_{\Realn})^\sigma:\mathcal{S}\to
\mathcal{S}_\s\coloneqq\{\varphi\in C^\infty(\Realn):(1+|x|^2)^{\frac{n+2\s}{2}}D^{k}\varphi(x)
\in L^\infty(\Realn),k\in\N_0\},$$
see \cite[pp.~72--73]{Silvestre}.
Observe that $\mathcal{S}\subset \mathcal{S}_{\sigma}$.
Then the symmetry of the fractional Laplacian allows us
to define $(-\Delta_{\Realn})^{\s}$ for $u$ in the dual
space $\mathcal{S}_\s'$. For locally integrable \textit{functions} $u$ in $\mathcal{S}_\s'$ we let
$$\langle (-\Delta_{\Realn})^\s u,\varphi\rangle_{\mathcal{S}_\s}\coloneqq
\int_{\Realn}u(x)(-\Delta_{\Realn})^\s \varphi(x)\,dx,
\quad \varphi\in\mathcal{S}.$$
Certainly, the integral above is absolutely convergent when (see also \cite{Silvestre})
$$u\in L_\sigma\coloneqq L^1(\Realn,(1+|x|^2)^{-\frac{n+2\sigma}{2}}\,dx).$$

The situation with the fractional Laplacian on the torus is different than the $\Realn$ case.
We first notice that $(-\Delta_{\Toron})^\s$ preserves the
class of smooth functions on $\Toron$. By symmetry we
are able to define this operator for any \textit{function} $v$ that is
a periodic distribution on $\Toron$. Indeed, we let
$$\langle (-\Delta_{\Toron})^\s v,\phi\rangle_{C^\infty(\Toron)}
\coloneqq \int_{\Toron}v(z)(-\Delta_{\Toron})^\s\phi(z)\,dz,\quad
\phi\in C^{\infty}(\Toron).$$

To relate both fractional Laplacians we define two operators.
For a function $v$ on $\Toron$ we define its \textit{repetition} $Rv:\Realn\to\Real$ by
$$(Rv)(x)=\sum_{k\in\Zn}v(x-2\pi k)\chi_{Q_n}(x-2\pi k), \quad x\in \Realn.$$
This is nothing but the $Q_n$-periodic function on $\Realn$ that coincides with $v$ on $\Toron$. Here
$\Toron$ is identified with $Q_n$ as explained above.
For a function $u:\Realn\to\Real$ we define its \textit{periodization}
as the function $p_{\Sigma}u:\Toron\rightarrow\Real$ given (formally) by
\begin{equation}
\label{eq:periodization}
(p_{\Sigma}u)(z)=\sum_{k\in\Zn}u(z+2\pi k), \quad z\in \Toron.
\end{equation}
\begin{ThA}[Transference formula]\label{thm:transferencia}
Let $v$ be a function on the torus such that
\begin{equation}
\label{eq:ImportantCondition}
\sum_{k\in \Zn\setminus\{0\}}|c_k(v)|\frac{e^{-|k|^2}}{|k|}<\infty.
\end{equation}
Then its repetition $Rv$ is a function in $L_{\s}$ which defines a distribution in $\mathcal{S}_{\sigma}'$ and such that
\begin{equation}\label{identidad}
\int_{\Realn}(Rv)(-\Delta_{\Realn})^\s\varphi\,dx=\int_{\Toron}v(-\Delta_{\Toron})^\s(p_\Sigma\varphi)\,dz,\quad
\varphi\in\mathcal{S}.
\end{equation}
In other words, when evaluated in periodizations of Schwartz functions,
the periodic distribution $(-\Delta_{\Toron})^\s v$ coincides with
the distributional fractional Laplacian on $\Realn$ of its repetition $Rv$.
\end{ThA}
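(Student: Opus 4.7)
My plan is to reduce the identity (1.3) to an equality of smooth periodic functions, by combining a folding computation with Fourier-side Poisson summation. As a preliminary, I would verify that $Rv\in L_\s$: splitting the weighted integral over the cubes $Q_n+2\pi k$ and translating $y=x-2\pi k$ on each piece, one obtains the bound $C\|v\|_{L^1(\Toron)}\sum_{k}(1+|k|)^{-n-2\s}<\infty$ since $n+2\s>n$. Hence $Rv\in\mathcal{S}_\s'$ and the left-hand side of (1.3) is well defined, since $(-\Delta_{\Realn})^\s\varphi\in\mathcal{S}_\s$.

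The central step is the basic folding identity
$$\int_{\Realn}(Rv)(x)g(x)\,dx = \int_{\Toron}v(z)(p_\Sigma g)(z)\,dz, \qquad g\in L^1(\Realn),$$
proved by partitioning $\Realn=\bigsqcup_k(Q_n+2\pi k)$ and translating on each piece. Taking $g=\psi\coloneqq(-\Delta_{\Realn})^\s\varphi$, which lies in $\mathcal{S}_\s\subset L^1(\Realn)$ because $|\psi(x)|\lesssim(1+|x|)^{-n-2\s}$ with $n+2\s>n$, this reduces (1.3) to the identity
$$p_\Sigma\psi=(-\Delta_{\Toron})^\s(p_\Sigma\varphi)\quad\text{on }\Toron.$$
I would verify this by comparing Fourier coefficients. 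A direct computation gives $c_\ell(p_\Sigma\varphi)=(2\pi)^{-n}\widehat{\varphi}(\ell)$, whence $(-\Delta_{\Toron})^\s(p_\Sigma\varphi)(z) = (2\pi)^{-n}\sum_\ell|\ell|^{2\s}\widehat{\varphi}(\ell)e^{i\ell\cdot z}$. On the other hand, $\widehat\psi(\xi)=|\xi|^{2\s}\widehat\varphi(\xi)$ is continuous, bounded and rapidly decreasing at infinity, and $\psi$ itself decays like $(1+|x|)^{-n-2\s}$. These two facts ensure that $p_\Sigma\psi$ is continuous on $\Toron$ with Fourier coefficients $c_\ell(p_\Sigma\psi) = (2\pi)^{-n}\widehat\psi(\ell) = (2\pi)^{-n}|\ell|^{2\s}\widehat\varphi(\ell)$, matching the previous series.

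The main obstacle I anticipate is the rigorous justification of Poisson summation applied to $\psi$, since $\psi\notin\mathcal{S}$ (its Fourier transform has the nonsmooth $|\xi|^{2\s}$ factor at the origin): one must rely on the explicit $(1+|x|)^{-n-2\s}$ decay of $\psi$ to get uniform convergence of its periodization, together with the rapid decay of $\widehat\psi(\ell)$ for $\ell\neq 0$ to identify the Fourier coefficients of $p_\Sigma\psi$. The hypothesis \eqref{eq:ImportantCondition} then enters when integrating the trigonometric series against $v$: combined with the rapid decay of $\widehat\varphi(\ell)$, it gives absolute convergence of $\sum_\ell|\ell|^{2\s}\widehat\varphi(\ell)c_{-\ell}(v)$ and legitimizes the term-by-term computation, producing the equality of the two sides of (1.3).
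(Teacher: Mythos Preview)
Your approach is correct and coincides with the paper's: both prove the key pointwise identity $p_\Sigma\big((-\Delta_{\Realn})^\sigma\varphi\big)=(-\Delta_{\Toron})^\sigma(p_\Sigma\varphi)$ via Poisson summation (using that $(-\Delta_{\Realn})^\sigma\varphi$ and its Fourier transform decay like $|x|^{-(n+2\sigma)}$), and then combine it with the same cube-by-cube folding of $\int_{\Realn}(Rv)\,g\,dx$.

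The one place you diverge is the preliminary step $Rv\in L_\sigma$. You bound the weighted integral directly by $C\|v\|_{L^1(\Toron)}\sum_k(1+|k|)^{-n-2\sigma}$, which is simpler and already shows that hypothesis \eqref{eq:ImportantCondition} is automatic once $v\in L^1(\Toron)$; the paper instead rewrites the weighted integral as $\int_{\Toron}|v|\,p_\Sigma\big((1+|\cdot|^2)^{-(n+2\sigma)/2}\big)\,dz$, identifies the Fourier coefficients of the periodized weight with values of the Bessel function $K_\sigma$, and then invokes \eqref{eq:ImportantCondition} through its asymptotics. Correspondingly, your remark that \eqref{eq:ImportantCondition} is what legitimizes the final series $\sum_\ell|\ell|^{2\sigma}\widehat\varphi(\ell)\,c_{-\ell}(v)$ is not where the paper deploys it---there the rapid decay of $\widehat\varphi$ alone suffices, since $|c_\ell(v)|\le\|v\|_{L^1}$---but this does not affect the validity of your argument.
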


\begin{proof}
We first check that $Rv\in L_\s$. Let us compute
\begin{equation}\label{dale}
\begin{aligned}
\int_{\Realn}\frac{|(Rv)(x)|}{(1+|x|^2)^{\frac{n+2\sigma}{2}}}\,dx&=\sum_{k\in\Zn}\int_{Q_n}\frac{|(Rv)(x+2k\pi)|}{(1+|x+2k\pi|^2)^{\frac{n+2\sigma}{2}}}\,dx\\
&=\int_{\Toron}|v(z)|p_{\Sigma}((1+|\cdot|^2)^{-\frac{n+2\sigma}{2}})(z)\,dz.
\end{aligned}
\end{equation}
Since $(1+|\cdot|^2)^{-\frac{n+2\sigma}{2}}$ is integrable,
then $[p_{\Sigma}(1+|\cdot|^2)^{-\frac{n+2\sigma}{2}}]$ is integrable (see \cite[Chapter~VII]{Stein-Weiss}). Its Fourier coefficient
can be computed as follows:
\begin{align*}
	\mathcal{F}[(1+|\cdot|^2)^{-\frac{n+2\sigma}{2}}](k) &=
	 \mathcal{F}^{-1}(\mathcal{F}(I-\Delta_{\Realn})^{-\frac{n+2\s}{2}})(k) \\
	 &=\frac{1}{\Gamma(\frac{n+2\sigma}{2})}\int_0^{\infty}e^{-t}
	 \frac{e^{-|k|^2/(4t)}}{(4\pi t)^{n/2}}\,\frac{dt}{t^{1-\frac{n+2\sigma}{2}}} \\
	 &= \frac{|k|^{2\s}}{(4\pi)^{n/2}4^\sigma\Gamma(\frac{n+2\s}{2})}\int_0^\infty e^{-|k|^2/(4r)}e^{-r}\,
	 \frac{dr}{r^{1+\s}} = c_{n,\s}K_{\sigma}(|k|^2).
\end{align*}
Here $K_\s(z)$ is the modified Bessel function of the third kind (see \cite[p.~119]{Lebedev}).
A well known asymptotic formula gives that $K_\s(|k|^2)\sim |k|^{-1}e^{-|k|^2}$,
as $|k|\to\infty$. Hence, by Parseval's identity on $\Toron$ and the hypothesis, from \eqref{dale} we get
$$\int_{\Realn}\frac{|(Rv)(x)|}{(1+|x|^2)^{\frac{n+2\sigma}{2}}}\,dx =
	c_{n,\s}\sum_{k\in\Zn}|c_k(v)|K_\s(|k|^2) 
	\leq C_{n,\s}\sum_{k\in\Zn\setminus\{0\}}|c_k(v)|\frac{e^{-|k|^2}}{|k|}<\infty.$$
Thus, $Rv\in L_\s$ and the left hand side of \eqref{identidad} is absolutely convergent.

Again, $p_{\Sigma}\varphi$ is integrable on $\Toron$ and
$c_k(p_{\Sigma}\varphi)=\widehat{\varphi}(k)$, for each $k\in\Zn$.
Moreover, since $\varphi$ and $\widehat{\varphi}$ decay at infinity as $|x|^{-n-\delta}$, $\delta>0$, we have
\begin{equation}\label{serie de la periodizada}
(p_{\Sigma}\varphi)(z)=\sum_{k\in\Zn}\widehat{\varphi}(k)e^{i k\cdot z},
\end{equation}
where the series converges absolutely, see 
\cite[Chapter~VII]{Stein-Weiss}. From here, using the properties of the Fourier
transform, it readily follows that $p_\Sigma \varphi$ is a smooth function on the torus.
Hence $(-\Delta_{\Toron})^{\sigma}(p_{\Sigma}\varphi)$ is smooth too and the 
right hand side of \eqref{identidad} is absolutely convergent.

Before proving \eqref{identidad} we compute the periodization of $(-\Delta_{\Realn})^\s\varphi$. Since $\varphi$ is in the Schwartz class,
both $(-\Delta_{\Realn})^\s\varphi$ and its Fourier transform decay as $|x|^{-(n+2\s)}$ at infinity.
Therefore, by \eqref{serie de la periodizada},
\begin{align*}
	\left[p_{\Sigma}(-\Delta_{\Realn})^{\sigma}\varphi\right](z) &= \sum_{k\in \Zn}
	\widehat{(-\Delta_{\Realn})^{\sigma}\varphi}(k)e^{i k\cdot z}
	= \sum_{k\in \Zn}|k|^{2\sigma}\widehat{\varphi}(k)e^{i k \cdot z} \\
	 &=\sum_{k\in \Zn}|k|^{2\sigma}c_k(p_{\Sigma}\varphi)e^{i k\cdot z}
	 =(-\Delta_{\Toron})^{\sigma}(p_{\Sigma}\varphi)(z),
\end{align*}
for each $z\in \Toron$. With this, we readily obtain
\begin{align*}
	\int_{\Realn}(Rv)(-\Delta_{\Realn})^{\sigma}\varphi\,dx
		&= \int_{\Realn}\Bigg[\sum_{k\in\Zn} v(x-2\pi k)\chi_{Q_n}(x-2\pi k)\Bigg](-\Delta_{\Realn})^{\sigma}\varphi(x)\,dx \\
		&= \sum_{k\in \Zn}\int_{Q_n+2\pi k}v(x-2\pi k)(-\Delta_{\Realn})^{\sigma}\varphi(x)\,dx \\
		&= \sum_{k\in \Zn}\int_{Q_n}v(z)(-\Delta_{\Realn})^{\sigma}\varphi(z+2\pi k)\,dz \\
		&=\int_{\Toron}v(z)\left[p_{\Sigma}(-\Delta_{\Realn})^{\sigma}\varphi\right](z)\,dz
		=\int_{\Toron}v(-\Delta_{\Toron})^{\sigma}(p_{\Sigma}\varphi)\,dz.
\end{align*}
Notice that the integration on the torus with respect to the Haar measure is just the integration over $Q_n$
with respect to the Lebesgue measure, so the previous to last equality is true.
\end{proof}

\begin{rem}
Formula \eqref{identidad} is certainly valid for functions $v\in L^p(\Toron)$, $1\le p\le \infty$.
Indeed, $v\in L^1(\Toron)$ and, by the Riemann--Lebesgue Lemma, $c_k(v)\to 0$ as $|k|\to\infty$, thus
\eqref{eq:ImportantCondition} holds.
Observe that condition \eqref{eq:ImportantCondition} also holds whenever $\sum_{k\in \Zn\setminus\{0\}}|k|^{2\sigma}|c_k(v)|^2<\infty$, that is, when $v$ is in the Sobolev space $H^{\sigma}=\Dom((-\Delta_{\Toron})^{\sigma/2})$.
\end{rem}

\section{Applications}

\subsection{Harnack inequalities}

Interior and boundary Harnack estimates for the fractional Laplacian on the torus now follow from
the transference formula in Theorem \ref{thm:transferencia}.

\begin{thm}[Interior Harnack inequality]\label{Thm:interior Harnack}
Let $\mathcal{O}\subseteq\Toron$ be an open set. For any compact subset $\mathcal{K}\subset \mathcal{O}$,
there exists a constant $C>0$, that depends only on $n$, $\sigma$ and $\mathcal{K}$, such that
$$\sup_{\mathcal{K}}v\leq C\inf_{\mathcal{K}}v,$$
for all solutions $v\in\Dom((-\Delta_{\Toron})^\sigma)$ to
$$\begin{cases}
(-\Delta_{\Toron})^{\sigma}v=0,&\hbox{in}~\mathcal{O},\\
v\geq0,&\hbox{on}~\Toron.
\end{cases}$$
\end{thm}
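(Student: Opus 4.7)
The plan is to lift the problem from $\Toron$ to $\Realn$ via Theorem~\ref{thm:transferencia} and then invoke the known interior Harnack inequality for the Euclidean fractional Laplacian (from, e.g., Silvestre or Caffarelli--Silvestre). The first step is to verify the hypothesis of Theorem~\ref{thm:transferencia}: since $v\in\Dom((-\Delta_{\Toron})^\s)$ gives $\sum_k |k|^{2\s}|c_k(v)|^2<\infty$, condition~\eqref{eq:ImportantCondition} holds by the Remark following Theorem~A. Hence $Rv\in L_\s$, identity~\eqref{identidad} is available, and $Rv\ge 0$ on $\Realn$ since $v\ge 0$ on $\Toron$.

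Second, I would show that $(-\Delta_{\Realn})^\s(Rv)=0$ holds distributionally on the lifted open set $\widetilde{\mathcal{O}}:=\pi^{-1}(\mathcal{O})\subset\Realn$, where $\pi:\Realn\to\Toron$ is the quotient map. For any $\varphi\in C_c^\infty(\widetilde{\mathcal{O}})\subset\mathcal{S}$, the periodization $p_\Sigma\varphi$ is smooth on the torus and supported in $\mathcal{O}$. Using the self-adjointness of $(-\Delta_{\Toron})^\s$ on its domain together with the assumption $(-\Delta_{\Toron})^\s v\equiv 0$ on $\mathcal{O}$,
$$\int_{\Toron} v\,(-\Delta_{\Toron})^\s(p_\Sigma\varphi)\,dz=\int_{\Toron}(-\Delta_{\Toron})^\s v\cdot p_\Sigma\varphi\,dz=0.$$
Combined with \eqref{identidad}, this yields $\int_{\Realn}Rv\cdot(-\Delta_{\Realn})^\s\varphi\,dx=0$ for every such $\varphi$, giving the claimed $\s$-harmonicity of $Rv$ on $\widetilde{\mathcal{O}}$.

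Third, having $Rv\ge 0$ on $\Realn$, $Rv\in L_\s$, and $(-\Delta_{\Realn})^\s(Rv)=0$ in $\widetilde{\mathcal{O}}$, I would apply the interior Harnack inequality for the Euclidean fractional Laplacian: for any compact $\widetilde{\mathcal{K}}\Subset\widetilde{\mathcal{O}}$ there exists $C=C(n,\s,\widetilde{\mathcal{K}})>0$ with
$$\sup_{\widetilde{\mathcal{K}}}Rv\le C\inf_{\widetilde{\mathcal{K}}}Rv.$$
Given $\mathcal{K}$ compact in $\mathcal{O}$, I would choose $\widetilde{\mathcal{K}}\subset\widetilde{\mathcal{O}}$ to be an injective (isometric) lift of $\mathcal{K}$ under $\pi$; since $Rv$ restricted to such a lift coincides with $v$ on $\mathcal{K}$ under the identification $Q_n\equiv\Toron$, the Euclidean estimate descends verbatim to the desired inequality on $\mathcal{K}$.

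The main technical obstacle is the geometry of this lift: a general compact $\mathcal{K}\subset\mathcal{O}$ need not fit inside a single fundamental domain, and $\pi$ is not globally injective. The way to handle this is to cover $\mathcal{K}$ by finitely many sufficiently small open balls, each of which admits an injective lift into $\Realn$, apply the $\Realn$ Harnack inequality on each lifted ball, and then chain the resulting estimates along this finite connected cover. The chaining produces a constant depending only on $n$, $\s$, and $\mathcal{K}$, as required.
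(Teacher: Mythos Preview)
Your proposal is correct and follows essentially the same route as the paper: lift via the transference formula, verify $\sigma$-harmonicity of $Rv$ on the lifted open set by testing against compactly supported $\varphi$ whose periodization is supported in $\mathcal{O}$, and invoke the Caffarelli--Silvestre interior Harnack on $\Realn$. You are more careful than the paper on two points --- you explicitly check condition~\eqref{eq:ImportantCondition} via the Remark, and you work on the full preimage $\pi^{-1}(\mathcal{O})$ with a chaining argument rather than identifying $\mathcal{O}$ with a subset of a single fundamental domain --- but these are refinements of the same argument, not a different method.
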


\begin{proof}
For $v$ as in the hypothesis, its repetition $Rv$ is a nonnegative function on $\Realn$ which
belongs to $L_\sigma$. We can identify $\mathcal{O}$ with an open subset $\tilde{\mathcal{O}}\subset Q_n$.
Take any smooth function $\varphi$ with compact support in $\tilde{\mathcal{O}}$.
Then $p_\Sigma\varphi$ is a smooth function on the torus supported in $\mathcal{O}$.
Now Theorem \ref{thm:transferencia} gives that
$\langle (-\Delta_{\Realn})^\s(Rv),\varphi\rangle_{\mathcal{S}_\s}
=\langle (-\Delta_{\Toron})^\s v,p_\Sigma\varphi\rangle_{C^\infty(\Toron)}=0$.
Hence $Rv$ is a nonnegative solution to $(-\Delta_{\Realn})^\s(Rv)=0$ in $\tilde{\mathcal{O}}$.
Then $Rv$ satisfies Harnack inequality (see \cite[Theorem~5.1]{Caffarelli-Silvestre}), and so does $v$.
\end{proof}

\begin{thm}[Boundary Harnack inequality]\label{Thm:boundary Harnack}
Let $0<\sigma<1$ and $v_1,v_2\in\Dom((-\Delta_{\Toron})^\sigma)$
be two nonnegative functions on $\Toron$. Suppose that
$(-\Delta_{\Toron})^{\sigma}v_j=0$ in $\mathcal{O}$,
for some open set $\mathcal{O}\subseteq\Toron$.
Let $z_0\in \partial \mathcal{O}$ and assume that $v_j=0$
for all $z\in B_r(z_0)\cap \mathcal{O}^c$, for some sufficiently small $r>0$.
Assume also that $\partial \mathcal{O}\cap B_r(z_0)$ is a Lipschitz graph in the direction of $z_1$.
Then, there is a constant $C$ depending only on $\mathcal{O}$, $z_0$, $r$, $n$ and $\sigma$,
but not on $v_1$ or $v_2$, such that
$$\sup_{\mathcal{O}\cap B_{r/2}(z_0)}\left(\frac{v_1}{v_2}\right)
\leq C\inf_{\mathcal{O}\cap B_{r/2}(z_0)}\left(\frac{v_1}{v_2}\right).$$
Moreover, $v_1/v_2$ is $\alpha$-H\"older continuous
in $\overline{\mathcal{O}\cap B_{r/2}(z_0)}$, for some universal $0<\alpha<1$.
\end{thm}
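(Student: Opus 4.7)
The plan is to mirror the argument used for Theorem \ref{Thm:interior Harnack}: transfer $v_1$ and $v_2$ to their repetitions $Rv_1$ and $Rv_2$ on $\Realn$ by means of Theorem \ref{thm:transferencia}, and then invoke the known boundary Harnack inequality for the fractional Laplacian on $\Realn$ in Lipschitz domains, together with the attendant H\"older regularity of quotients.

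Since the statement is valid for all sufficiently small $r>0$, I would first shrink $r$ and, if necessary, translate the identification $\Toron \equiv Q_n$ so that $B_r(z_0)\subset Q_n$. Under this identification $\mathcal{O}$ corresponds to an open set $\tilde{\mathcal{O}}\subset Q_n$ with $z_0\in\partial\tilde{\mathcal{O}}$, and the Lipschitz graph hypothesis on $\partial\mathcal{O}\cap B_r(z_0)$ in the direction of $z_1$ carries over verbatim to $\partial\tilde{\mathcal{O}}\cap B_r(z_0)$.

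Next, I would verify the three standard hypotheses needed to invoke the boundary Harnack inequality on $\Realn$ for the pair $Rv_1, Rv_2$. \emph{Nonnegativity and tail control}: $Rv_j\geq 0$ and $Rv_j\in L_\s$, which follows from the assumption and Theorem \ref{thm:transferencia} (the membership $v_j\in\Dom((-\Delta_{\Toron})^\s)$ ensures \eqref{eq:ImportantCondition}). \emph{Harmonicity}: $(-\Delta_{\Realn})^\s(Rv_j)=0$ in $\tilde{\mathcal{O}}$ in the distributional sense, because for any $\varphi\in C_c^\infty(\tilde{\mathcal{O}})$ the periodization $p_\Sigma\varphi$ is smooth on $\Toron$ and supported in $\mathcal{O}$, so the right-hand side of \eqref{identidad} vanishes. \emph{Local vanishing}: $Rv_j=0$ on $B_r(z_0)\cap\tilde{\mathcal{O}}^c$, since $B_r(z_0)\subset Q_n$ forces $Rv_j$ to coincide with $v_j$ on $B_r(z_0)$, and $v_j=0$ there on $\mathcal{O}^c$ by hypothesis.

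With these three facts, applying the $\Realn$-boundary Harnack inequality and the H\"older regularity of quotients for $(-\Delta_{\Realn})^\s$ in Lipschitz domains to $Rv_1, Rv_2$ at $z_0$ yields the sup-inf comparison together with the $\alpha$-H\"older continuity of $Rv_1/Rv_2$ on $\overline{\tilde{\mathcal{O}}\cap B_{r/2}(z_0)}$. Since $Rv_j=v_j$ on $B_r(z_0)$, both statements descend to $v_1/v_2$ on $\overline{\mathcal{O}\cap B_{r/2}(z_0)}$. The main delicate point is that $Rv_j$ has nonzero mass on the translated copies of $\mathcal{O}$ far from $z_0$, so one must invoke a formulation of the nonlocal boundary Harnack inequality that requires vanishing only inside the small ball $B_r(z_0)$ and tolerates an integrable tail at infinity; this is exactly the control provided by $Rv_j\in L_\s$, and such formulations are the standard ones in the nonlocal boundary Harnack literature.
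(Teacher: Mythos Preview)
Your proposal is correct and follows essentially the same route as the paper's proof: identify $\mathcal{O}$ with $\tilde{\mathcal{O}}\subset Q_n$, use the transference formula to show $Rv_j\in L_\s$ are nonnegative solutions of $(-\Delta_{\Realn})^\s(Rv_j)=0$ in $\tilde{\mathcal{O}}$ vanishing on $B_r(z_0)\cap\tilde{\mathcal{O}}^c$, invoke the $\Realn$ boundary Harnack inequality (Caffarelli--Silvestre), and restrict back to the torus. Your write-up is in fact a bit more careful than the paper's about ensuring $B_r(z_0)\subset Q_n$ and about the tail hypothesis needed in the nonlocal boundary Harnack statement.
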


\begin{proof}
Again we have that $Rv_i$ is in $L_\sigma$. We
identify $\mathcal{O}$ with an open subset $\tilde{\mathcal{O}}\subset Q_n$.
Let us also identify $z_0\in \partial \mathcal{O}$ with $x_0\in \partial \tilde{\mathcal{O}}$.
Then the corresponding boundary portion $\partial\tilde{\mathcal{O}}\cap B_r(x_0)$ is
a Lipschitz graph in the $x_1$-direction.
Using the same argument as in the proof of Theorem \ref{Thm:interior Harnack},
it follows that $Rv_i$ are nonnegative solutions to $(-\Delta_{\Realn})^{\s}(Rv_i)=0$ in $\tilde{\mathcal{O}}$,
and $Rv_i=0$ in $B_r(x_0)\cap\tilde{\mathcal{O}}^c$.
Therefore, the boundary Harnack inequality holds
for $Rv_i$ (see \cite[Theorem 5.3]{Caffarelli-Silvestre}), and so does for $v_i$
by restricting $Rv_i$ to $\mathcal{O}$.  The H\"older continuity of $v_1/v_2$ follows from the H\"older
continuity for $(Rv_1)/(Rv_2)$.
\end{proof}

\subsection{Extension problem}

It is known that the Caffarelli--Silvestre extension problem characterization
is valid also for the fractional Laplacian on the torus,
see \cite{Stinga,Stinga-Torrea}, also \cite{Roncal-Stinga, Gale-Miana-Stinga}. Here we can derive it directly
from the Caffarelli--Silvestre result of $\Realn$ in \cite{Caffarelli-Silvestre} with the explicit constants
computed in \cite{Stinga-Torrea}. In the proof we are going
to need the following simple result.

\begin{lem}\label{lem:lema-de-bumps}
Let $\phi$ be a smooth function on $\Toron$. Then there exists a smooth function $\varphi$ with compact support
on $\Realn$ such that
$$\phi(z)=p_{\Sigma}\varphi(z),\quad\hbox{for}~z\in \Toron.$$
\end{lem}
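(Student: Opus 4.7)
The plan is to build a smooth compactly supported ``bump'' on $\Realn$ whose $2\pi\Zn$-translates partition unity, and then multiply it against the periodic extension of $\phi$. More precisely, I would pick a nonnegative $\psi\in C^{\infty}(\Realn)$ with compact support such that $\psi>0$ on the closed cube $\overline{Q_n}$, and then form the periodization
\[
\Psi(x)=\sum_{k\in\Zn}\psi(x+2\pi k),\qquad x\in\Realn.
\]
Because $\psi$ has compact support the sum is locally finite, so $\Psi$ is smooth and $2\pi\Zn$-periodic; by the choice of $\psi$ every point of $\Realn$ lies in some translate $\supp\psi-2\pi k$, so $\Psi>0$ everywhere. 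Setting $\eta(x)=\psi(x)/\Psi(x)$ produces a smooth, compactly supported function on $\Realn$ satisfying
\[
\sum_{k\in\Zn}\eta(x+2\pi k)=1,\qquad x\in\Realn.
\]

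The second step is to let $\tilde\phi$ denote the $2\pi\Zn$-periodic extension of $\phi$ to $\Realn$, which is smooth because $\phi\in C^{\infty}(\Toron)$. I would then define
\[
\varphi(x)\coloneqq \eta(x)\,\tilde\phi(x),\qquad x\in\Realn.
\]
As a product of a compactly supported smooth function with a smooth function, $\varphi$ lies in $C^{\infty}_c(\Realn)$.

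The final step is a direct verification. For $z\in \Toron$, using the periodicity $\tilde\phi(z+2\pi k)=\tilde\phi(z)=\phi(z)$ and the partition of unity,
\[
(p_{\Sigma}\varphi)(z)=\sum_{k\in\Zn}\varphi(z+2\pi k)=\sum_{k\in\Zn}\eta(z+2\pi k)\,\tilde\phi(z+2\pi k)=\phi(z)\sum_{k\in\Zn}\eta(z+2\pi k)=\phi(z),
\]
so $\phi=p_{\Sigma}\varphi$ on $\Toron$, as required. The only mildly delicate point is the construction of $\eta$: one must ensure both the compact support of $\psi$ and the strict positivity of the periodization $\Psi$, but both are easily arranged by taking $\psi>0$ on a cube that covers $\overline{Q_n}$. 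Everything else is a one-line computation.
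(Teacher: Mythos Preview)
Your proof is correct and follows essentially the same approach as the paper: build a smooth compactly supported partition of unity subordinate to the lattice $2\pi\Zn$ and multiply it against the periodic extension $R\phi$ of $\phi$. The only cosmetic difference is that the paper constructs the partition function directly as the convolution of $\chi_{Q_n}$ with a smooth bump of integral~$1$, whereas you obtain it by normalizing a bump by its own periodization; both are standard and equivalent.
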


\begin{proof}
It is easy to see that
there exists a smooth function $\psi$ with compact support on $\Realn$ such that
$\sum_{k\in \Zn}\psi(x+2\pi k)\equiv 1$, for all $x\in \Realn$. Indeed,
$\psi$ can be constructed as the convolution of the characteristic function of $Q_n$ with a smooth bump function that
has integral $1$.  Set $\varphi(x)=\psi(x)(R\phi)(x)$.
Clearly, $\varphi$ is smooth (see the proof of Theorem \ref{thm:transferencia})
and has compact support. Moreover,
\begin{align*}
	(p_{\Sigma}\varphi)(z) &= \sum_{k\in \Zn}\psi(z+2\pi k)(R\phi)(z+2\pi k) \\
		&=\sum_{k\in \Zn}\psi(z+2\pi k)\sum_{j\in\Zn}\phi(z+2\pi k+2\pi j)\chi_{Q_n}(z+2\pi k+2\pi j) \\
		&=\phi(z)\sum_{k\in\Zn}\psi(z+2\pi k)=\phi(z),\quad z\in\Toron.
\end{align*}
\end{proof}

\begin{thm}[Extension problem]
Let $v\in \Dom((-\Delta_{\Toron})^{\sigma})$.
Let $V=V(z,y)$ be the solution to the boundary value problem
\begin{equation}\label{CStoro}
\begin{cases}
\Delta_{\Toron}V+\frac{1-2\sigma}{y}V_y+V_{yy}=0,&\hbox{in}~\Toron\times(0,\infty),\\
V(z,0)=v(z),&\hbox{on}~\Toron.
\end{cases}
\end{equation}
Then, for $c_\sigma=\frac{\Gamma(1-\sigma)}{4^{\sigma-1/2}\Gamma(\sigma)}>0$, we have that
\begin{equation}\label{CS2}
-\lim_{y\to0^+}y^{1-2\sigma}V_y(z,y)=c_\sigma(-\Delta_{\Toron})^{\sigma}v(z),\quad z\in\Toron.
\end{equation}
\end{thm}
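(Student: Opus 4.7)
The plan is to build the torus extension $V$ by periodizing in $z$ the Caffarelli--Silvestre extension on $\Realn$ of a suitable compactly supported function, and then transfer the conormal derivative formula from $\Realn$ back to $\Toron$, combining Lemma~\ref{lem:lema-de-bumps} with the periodization identity $p_\Sigma\circ(-\Delta_{\Realn})^\sigma=(-\Delta_{\Toron})^\sigma\circ p_\Sigma$ established inside the proof of Theorem~\ref{thm:transferencia}.

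First I would reduce to the case $v=p_\Sigma\varphi$ for some $\varphi\in C_c^\infty(\Realn)$. For smooth $v$ on $\Toron$ this is precisely Lemma~\ref{lem:lema-de-bumps}; the general case $v\in\Dom((-\Delta_{\Toron})^\sigma)$ is handled at the end by density. Fixing such $\varphi$, let $\tilde\varphi(x,y)$ be its Caffarelli--Silvestre extension on $\Realn\times(0,\infty)$. Because $\varphi$ is compactly supported, differentiating its Poisson-kernel representation yields a pointwise bound of the form
\[
|\tilde\varphi(x,y)|+|y^{1-2\sigma}\partial_y\tilde\varphi(x,y)|\le C(1+|x|^2+y^2)^{-(n+2\sigma)/2}
\]
uniformly for $y$ in any bounded subset of $[0,\infty)$. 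This decay makes
\[
W(z,y)\coloneqq \sum_{k\in\Zn}\tilde\varphi(z+2\pi k,y)
\]
absolutely convergent, smooth on $\Toron\times(0,\infty)$, and continuous up to $\{y=0\}$ with $W(z,0)=p_\Sigma\varphi(z)=v(z)$.

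Next I would identify $W$ with $V$. Term by term, $W$ solves the PDE in \eqref{CStoro} and realizes the correct boundary value, and uniqueness of bounded solutions of \eqref{CStoro}---obtained via Fourier series on $\Toron$, since for each mode the radial ODE $f''+\tfrac{1-2\sigma}{y}f'-|k|^2 f=0$ admits a unique bounded solution with $f(0)=1$, namely a constant multiple of $y^\sigma K_\sigma(|k|y)$---therefore forces $W=V$. The same uniform bound lets me interchange $\lim_{y\to 0^+}$ with the periodizing sum; since every translate of $\tilde\varphi$ satisfies the Caffarelli--Silvestre identity on $\Realn$ with the explicit constant $c_\sigma$ of \cite{Stinga-Torrea}, I would conclude
\[
-\lim_{y\to 0^+}y^{1-2\sigma}V_y(z,y)=c_\sigma\, p_\Sigma\bigl[(-\Delta_{\Realn})^\sigma\varphi\bigr](z)=c_\sigma(-\Delta_{\Toron})^\sigma(p_\Sigma\varphi)(z)=c_\sigma(-\Delta_{\Toron})^\sigma v(z),
\]
where the middle equality is exactly the identity $p_\Sigma\circ(-\Delta_{\Realn})^\sigma=(-\Delta_{\Toron})^\sigma\circ p_\Sigma$ (valid for Schwartz functions) that appears in the proof of Theorem~\ref{thm:transferencia}.

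The main technical obstacle is the uniform-in-$y$ decay of $y^{1-2\sigma}\partial_y\tilde\varphi(x,y)$ needed to pass the limit through the infinite sum; this should be obtainable by direct differentiation of the fractional Poisson kernel, using the compact support of $\varphi$. The final passage from smooth $v$ to arbitrary $v\in\Dom((-\Delta_{\Toron})^\sigma)$ is then routine: one approximates $v$ by trigonometric-polynomial truncations $v_N$ of its Fourier series (each of which is smooth, hence covered by the previous step) and uses the explicit representation $V(z,y)=\sum_{k\in\Zn}c_k(v)\,\tfrac{2}{\Gamma(\sigma)}(|k|y/2)^\sigma K_\sigma(|k|y)\,e^{ik\cdot z}$ to show that both sides of \eqref{CS2} depend continuously on $v$ in the $\Dom((-\Delta_{\Toron})^\sigma)$ topology.
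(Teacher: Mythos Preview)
Your argument is correct but proceeds along a route genuinely different from the paper's. The paper goes in the ``opposite'' direction: it takes $u=Rv$ (the periodic repetition of $v$), writes the $\Realn$--extension $U=P_y^\sigma\ast u$, observes that $U$ restricted to $Q_n$ equals $v\ast(p_\Sigma P_y^\sigma)$ and hence coincides with $V$ by uniqueness, and then establishes \eqref{CS2} \emph{weakly} by pairing against test functions and invoking the transference formula of Theorem~\ref{thm:transferencia} together with the Caffarelli--Silvestre result on $\Realn$; Lemma~\ref{lem:lema-de-bumps} enters only at the very end to see that every smooth $\phi$ on $\Toron$ is a periodization. You instead invoke Lemma~\ref{lem:lema-de-bumps} at the outset to write a smooth $v$ as $p_\Sigma\varphi$ with $\varphi\in C_c^\infty(\Realn)$, periodize the $\Realn$--extension of $\varphi$, and recover \eqref{CS2} \emph{pointwise} by passing the limit through the lattice sum via the decay bound you isolate; the identity $p_\Sigma\circ(-\Delta_{\Realn})^\sigma=(-\Delta_{\Toron})^\sigma\circ p_\Sigma$ from the proof of Theorem~\ref{thm:transferencia} then finishes the smooth case, and a density step handles general $v\in\Dom((-\Delta_{\Toron})^\sigma)$. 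The paper's route is shorter and treats all $v\in\Dom((-\Delta_{\Toron})^\sigma)$ uniformly without density or kernel estimates, at the price of yielding the identity only in the distributional sense; your route is more hands--on, delivers a genuine pointwise statement for smooth data, but requires the uniform conormal bound you flag and an extra approximation argument.
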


\begin{proof}
Consider $u=Rv\in L_\sigma$. Let $U$ be the solution to the extension problem for $u$:
\begin{equation*}
\begin{cases}
\Delta_{\Realn}U+\frac{1-2\sigma}{y}U_y+U_{yy}=0,&\hbox{in}~\Realn\times(0,\infty),\\
U(x,0)=u(x),&\hbox{on}~\Realn.
\end{cases}
\end{equation*}
From \cite{Caffarelli-Silvestre} we know that $U(x,y)=P^\sigma_y\ast u(x)$,
for a suitable Poisson kernel $P^\sigma_y(x)$. Using this Poisson formula and
analogous to the proof of Theorem \ref{thm:transferencia},
it can be checked that $U(z,y)=(v\ast(p_\Sigma P^\sigma_y))(z)$, $z\in \Toron$, where the convolution
is performed on $\Toron$. Then $U(z,y)$ is a solution to \eqref{CStoro}.
By uniqueness, it follows that $V(\cdot,y)=v\ast(p_\Sigma P^\sigma_y)$,
for each $y>0$. Moreover, by Theorem \ref{thm:transferencia}
and the Caffarelli--Silvestre extension result for the fractional Laplacian on $\Realn$
in \cite{Caffarelli-Silvestre},
\begin{align*}
	c_\sigma\int_{\Toron}v (-\Delta_{\Toron})^\sigma(p_\Sigma \varphi)\,dz &=c_\sigma\int_{\Realn}
	u(-\Delta_{\Realn})^\sigma \varphi\,dx
	=-\lim_{y\to0^+}\int_{\Realn} y^{1-2\sigma}U_y(x,y)\varphi(x)\,dx \\
	&=-\lim_{y\to0^+}\sum_{k\in \Zn}\int_{Q_n} y^{1-2\sigma}U_y(z+2\pi k,y)\varphi(z+2\pi k)\,dz \\
	&=-\lim_{y\to0^+}\int_{Q_n} y^{1-2\sigma}U_y(z,y)(p_{\Sigma}\varphi)(z)\,dz \\
	& = -\lim_{y\to0^+}\int_{\Toron}y^{1-2\sigma}V_y(z,y)(p_\Sigma\varphi)(z)\,dz.
\end{align*}
Now \eqref{CS2} follows because any smooth function $\phi$ on the torus can be expressed
as $p_\Sigma\varphi$, for some $\varphi\in\mathcal{S}$, see Lemma \ref{lem:lema-de-bumps}.
\end{proof}

\subsection{Pointwise formula}

Let $0<\alpha\le1$ and $k\in\mathbb{N}_0$.
A continuous real function $v$ defined on $\Toron$ belongs to the H\"{o}lder space
$C^{k,\alpha}(\Toron)$, if $v\in C^k(\Toron)$ and
$$[D^\gamma v]_{C^\alpha(\Toron)}:=
\sup_{\begin{subarray}{c}x,y\in\Toron\\x\neq y\end{subarray}}
\frac{|D^\gamma v(x)-D^\gamma v(y)|}{\d(x,y)^{\alpha}}<\infty,$$
for each multi-index $\gamma\in \mathbb{N}_0^n$ such that $|\gamma|=k$.
Here $\d(x,y)$ is the geodesic distance from $x$ to $y$ on $\Toron$.
We define the norm in the spaces $C^{k,\alpha}(\Toron)$ as usual.

\begin{thm}[Pointwise formula]
\label{thm:puntual-toro}
Let $v\in C^{0,2\sigma+\varepsilon}(\Toron)$ if $0<\sigma<1/2$
(or $v\in C^{1,2\sigma+\varepsilon-1}(\Toron)$ if $1/2\leq\sigma<1$).
Then $(-\Delta_{\Toron})^{\sigma}v$ coincides with the continuous function on $\Toron$
given by
\begin{align*}
(-\Delta_{\Toron})^{\sigma}v(x) &= \PV\int_{\Toron}(v(x)-v(z))K_{\sigma}(x-z)\,dz\\
&=\lim_{\delta\to0^+}\int_{|x-z|>\delta,z\in\Toron}(v(x)-v(z))K_{\sigma}(x-z)\,dz,\quad x\in\Toron,
\end{align*}
where, for $x\in\Toron$, $x\neq0$,
$$K_{\sigma}(x)
=\frac{2^\sigma\Gamma\left(\frac{n+\sigma}{2}\right)}{|\Gamma(-\sigma/2)|\pi^{n/2}}
\sum_{k\in\Zn}\frac{1}{|x+2\pi k|^{n+2\sigma}}.$$
In the case $0<\sigma<1/2$ the integral above is in fact absolutely convergent.
\end{thm}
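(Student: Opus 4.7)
The plan is to import Silvestre's pointwise formula for $(-\Delta_{\Realn})^\sigma$ via the transference identity in Theorem A.

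First, I would observe that under either regularity hypothesis $v$ is continuous, hence bounded, on $\Toron$, so $|c_k(v)|$ is uniformly bounded and condition \eqref{eq:ImportantCondition} trivially holds. Thus Theorem A applies. Moreover, since $v\in C^{0,2\sigma+\varepsilon}(\Toron)$ (resp.\ $C^{1,2\sigma+\varepsilon-1}(\Toron)$), its repetition $Rv$ is a bounded periodic function on $\Realn$ with precisely the same global H\"older regularity. Together with $Rv\in L_\sigma$ from Theorem A, this puts $Rv$ in the class for which the Euclidean pointwise formula (see \cite[pp.~72--73]{Silvestre}) applies, yielding that $(-\Delta_{\Realn})^\sigma(Rv)$ coincides, at every $x\in\Realn$, with the continuous function
$$w(x):=c_{n,\sigma}\,\PV\int_{\Realn}\frac{Rv(x)-Rv(y)}{|x-y|^{n+2\sigma}}\,dy.$$

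Second, I would unfold this integral onto the fundamental cell $Q_n$. Fix $x\in Q_n$ and write $\Realn=\bigsqcup_{k\in\Zn}(Q_n+2\pi k)$. After the substitution $y=z+2\pi k$ with $z\in Q_n$, one has $Rv(y)=v(z)$ and $|x-y|=|x-z-2\pi k|$, so formally
$$w(x)=c_{n,\sigma}\,\PV\int_{Q_n}(v(x)-v(z))\sum_{k\in\Zn}\frac{1}{|x-z-2\pi k|^{n+2\sigma}}\,dz=\PV\int_{\Toron}(v(x)-v(z))K_\sigma(x-z)\,dz.$$
Interchanging the sum and the integral is legitimate because for $k\neq 0$ one has $|x-z-2\pi k|\geq c|k|$ uniformly for $x,z\in Q_n$, which makes the $k\neq 0$ contribution an absolutely convergent series of bounded integrands. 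The only singular piece is therefore the $k=0$ term, and the principal value handles it exactly as in the Euclidean case. When $0<\sigma<1/2$, the H\"older bound $|v(x)-v(z)|\leq C|x-z|^{2\sigma+\varepsilon}$ makes the whole integral absolutely convergent.

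Third, I would identify $w$ with $(-\Delta_{\Toron})^\sigma v$. By the transference identity in Theorem A, for any $\varphi\in\mathcal{S}$,
$$\int_{\Toron}v\,(-\Delta_{\Toron})^\sigma(p_\Sigma\varphi)\,dz=\int_{\Realn}(Rv)(-\Delta_{\Realn})^\sigma\varphi\,dx=\int_{\Realn}w\,\varphi\,dx,$$
where in the last step I use that $(-\Delta_{\Realn})^\sigma(Rv)=w$ is a bounded continuous periodic function on $\Realn$, so the symmetry of the Euclidean fractional Laplacian applies. Folding the $\Realn$ integral against the periodic $w$ over the cells $Q_n+2\pi k$ produces $\int_{\Toron}w\,(p_\Sigma\varphi)\,dz$. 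By Lemma \ref{lem:lema-de-bumps} every smooth function on $\Toron$ arises as $p_\Sigma\varphi$, so the resulting identity determines $(-\Delta_{\Toron})^\sigma v$ uniquely, giving $(-\Delta_{\Toron})^\sigma v=w$ as continuous functions on $\Toron$.

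The delicate point I expect is the unfolding step where the principal value must survive the interchange with an infinite sum. The far-field bound $|x-z-2\pi k|\geq c|k|$ handles $k\neq 0$ comfortably, but for $1/2\leq\sigma<1$ one must exploit first-order cancellation (which is precisely why the hypothesis is strengthened to $C^{1,2\sigma+\varepsilon-1}$), symmetrizing the $k=0$ integrand around $x$ exactly as in the classical Euclidean proof in \cite{Silvestre}.
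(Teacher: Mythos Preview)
Your proposal is correct and follows essentially the same route as the paper: lift $v$ to its repetition $Rv$, invoke Silvestre's Euclidean pointwise formula, unfold the singular integral over the lattice translates of $Q_n$ to produce $K_\sigma$, and then use the transference identity together with Lemma~\ref{lem:lema-de-bumps} to pass from the distributional pairing to the pointwise identification. Your treatment of the interchange of sum and principal value is in fact slightly more explicit than the paper's.
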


One may think that $K_\sigma$ is just the periodization of the kernel of the fractional Laplacian on $\Realn$.
In fact, formally, $K_\sigma(x)=c_{n,\sigma}p_\Sigma(|x|^{-(n+2\sigma)})$. But, since
$|x|^{-(n+2\sigma)}$ is not integrable on $\Realn$, this formal identity makes no sense. 

\begin{proof}[Proof of Theorem \ref{thm:puntual-toro}]
Notice that $K_\sigma(x)$ is well defined for $x\neq0$. Indeed, if $k\neq0$, then
for $x\in\Toron$ we have $|\pi k|\leq c_n|x-2k\pi|$, so
$$0\leq K_\sigma(x)\leq C_{n,\sigma}\left[\frac{1}{|x|^{n+2\sigma}}+\sum_{k\in\Zn\setminus\{0\}}
\frac{1}{|\pi k|^{n+2\sigma}}\right],\quad x\neq0,$$
and the series is absolutely convergent.
We have to prove that
\begin{equation}\label{eq:puntual-debil}
\langle(-\Delta_{\Toron})^{\sigma}v,
\phi\rangle_{C^{\infty}(\Toron)}=\int_{\Toron}h(x)\phi(x)\,dx,\quad\hbox{for any}~\phi\in C^\infty(\Toron),
\end{equation}
where the continuous function $h$ is given by
\[
h(x)=\PV\int_{\Toron}(v(x)-v(z))K_{\sigma}(x-z)\,dz.
\]
Let $u=Rv$. Then $u$ is bounded and it belongs to $C^{0,2\sigma+\varepsilon}(\Realn)$
(or to $C^{1,2\sigma+\varepsilon-1}(\Realn)$), so $(-\Delta_{\Realn})^{\sigma}u$ is a continuous function
on $\Realn$ (see \cite[Proposition 2.4]{Silvestre}) and
\begin{equation}
\begin{aligned}
\label{eq:puntual-repeticion}
(-\Delta_{\Realn})^{\sigma}u(x)&=c_{n,\sigma}\PV\int_{\Realn}
\frac{u(x)-u(y)}{|x-y|^{n+2\sigma}}\,dy\\
&=c_{n,\sigma}\PV\sum_{k\in \Zn}\int_{Q_n}
\frac{u(x)-u(z-2\pi k)}{|x-z+2\pi k|^{n+2\sigma}}\,dz\\
&=c_{n,\sigma}\PV\sum_{k\in \Zn}\int_{Q_n}
\frac{v(x)-v(z)}{|x-z+2\pi k|^{n+2\sigma}}\,dz\\
&=c_{n,\sigma}\PV\int_{\Toron}(v(x)-v(z))K_{\sigma}(x-z)\,dz=h(x).
\end{aligned}
\end{equation}
With this we conclude that $h$ is a continuous function on $\Toron$. Observe that
$(-\Delta_{\Realn})^{\sigma}u$ is a $Q_n$-periodic function.
To establish \eqref{eq:puntual-debil}, let $\phi$ be any smooth function on the torus.
By Lemma \ref{lem:lema-de-bumps}, there exists $\varphi\in\mathcal{S}$ such that $\phi(z)=p_{\Sigma}\varphi(z)$, $z\in \Toron$.
Then, by Theorem \ref{thm:transferencia} and \eqref{eq:puntual-repeticion},
\begin{align*}
\langle(-\Delta_{\Toron})^{\sigma}v,\phi\rangle_{C^{\infty}(\Toron)}
&=\langle(-\Delta_{\Toron})^{\sigma}v,
p_{\Sigma}\varphi\rangle_{C^{\infty}(\Toron)}=\langle(-\Delta_{\Realn})^{\sigma}u,
\varphi\rangle_{\mathcal{S}_{\sigma}}\\
&=\int_{\Realn}(-\Delta_{\Realn})^{\sigma}u(x)
\varphi(x)\,dx\\
&=\sum_{k\in \Zn}\int_{Q_n}(-\Delta_{\Realn})^{\sigma}u(x+2\pi k)\varphi(x+2\pi k)\,dx\\
&=\int_{Q_n}(-\Delta_{\Realn})^{\sigma}u(x)(p_{\Sigma}\varphi)(x)\,dx=\int_{\Toron}h(x)\phi(x)\,dx.
\end{align*}
\end{proof}

\subsection{H\"older regularity}

H\"older estimates follow directly from our transference formula and the known results for
the fractional Laplacian on $\Realn$.

\begin{thm}[H\"older estimates]
Take $\alpha\in (0,1]$.
\begin{enumerate}
\item[$(1)$] Let $v\in C^{0,\alpha}(\Toron)$ and $0<2\sigma<\alpha$. Then $(-\Delta_{\Toron})^{\sigma}v\in C^{0,\alpha-2\sigma}(\Toron)$ and
\[
\|(-\Delta_{\Toron})^{\sigma}v\|_{ C^{0,\alpha-2\sigma}(\Toron)}\le C\|v\|_{ C^{0,\alpha}(\Toron)}.
\]
\item[$(2)$] Let $v\in C^{1,\alpha}(\Toron)$ and $0<2\sigma<\alpha$. Then $(-\Delta_{\Toron})^{\sigma}v\in C^{1,\alpha-2\sigma}(\Toron)$ and
\[
\|(-\Delta_{\Toron})^{\sigma}v\|_{C^{1,\alpha-2\sigma}(\Toron)}\leq C\|v\|_{C^{1,\alpha}(\Toron)}.
\]
   \item[$(3)$] Let $v\in C^{1,\alpha}(\Toron)$ and $2\sigma\geq\alpha$, with $\alpha-2\sigma+1\neq0$.
   Then $(-\Delta_{\Toron})^{\sigma}v\in C^{0,\alpha-2\sigma+1}(\Toron)$ and
\[
\|(-\Delta_{\Toron})^{\sigma}v\|_{C^{0,\alpha-2\sigma+1}(\Toron)}\leq C\|v\|_{C^{1,\alpha}(\Toron)}.
\]
\item[$(4)$] Let $v\in C^{k,\alpha}(\Toron)$ and assume that $k+\alpha-2\sigma$ is not an integer.
Then $(-\Delta_{\Toron})^\sigma v\in C^{l,\beta}(\Toron)$, where $l$ is the integer part of $k+\alpha-2\sigma$
and $\beta=k+\alpha-2\sigma-l$, and
$$\|(-\Delta_{\Toron})^\sigma v\|_{C^{l,\beta}(\Toron)}\leq C\|v\|_{C^{k,\alpha}(\Toron)}.$$
\end{enumerate}
\end{thm}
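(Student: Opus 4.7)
The plan is to reduce each of (1)--(4) to the corresponding known $\Realn$-estimate by passing to the repetition $Rv$.

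First I would establish the norm inequality
$$\|Rv\|_{C^{k,\alpha}(\Realn)}\leq C_n\|v\|_{C^{k,\alpha}(\Toron)}.$$
The function $Rv$ is $2\pi\Zn$-periodic and bounded, and derivatives through order $k$ extend smoothly across $\partial Q_n$ because $v\in C^k(\Toron)$ on the smooth manifold $\Toron$. The H\"older part is immediate from $\d(\pi(x),\pi(y))\leq|x-y|$ for the projection $\pi\colon\Realn\to\Toron$, since $D^\gamma(Rv)(x)=D^\gamma v(\pi(x))$.

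Next, I would apply to $u=Rv$ the classical H\"older estimates for the fractional Laplacian on $\Realn$ from \cite{Silvestre, Caffarelli-Silvestre}: parts (1)--(3) are the standard statements, while (4) is the general form, alternatively derivable by iteration since $(-\Delta_{\Realn})^\sigma$ commutes with $\partial_i$. This yields the desired regularity of $(-\Delta_{\Realn})^\sigma u$ on $\Realn$ with the right norm bound. I would then identify
$$(-\Delta_{\Realn})^\sigma(Rv)=R\bigl((-\Delta_{\Toron})^\sigma v\bigr)$$
pointwise: by the periodicity of $Rv$ together with Theorem \ref{thm:transferencia}, the two sides agree as periodic distributions on $\Realn$, and in the regimes considered both are continuous, so the equality is pointwise (alternatively, in the range where it applies, Theorem \ref{thm:puntual-toro} provides this directly).

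To conclude, for any $Q_n$-periodic continuous function $H$ on $\Realn$, any two points on $\Toron$ admit lifts realising the geodesic distance as their Euclidean distance, so $[H|_{\Toron}]_{C^{l,\beta}(\Toron)}\leq [H]_{C^{l,\beta}(\Realn)}$; applied to $H=(-\Delta_{\Realn})^\sigma(Rv)$ this gives the estimate on $\Toron$. The main obstacle is essentially bookkeeping: matching the exponent regimes of (1)--(4) with the precise form of the $\Realn$-regularity result invoked and, in case (4) with $k\geq 2$, either appealing to the general statement or iterating, while respecting the non-integrality assumption $k+\alpha-2\sigma\notin\Z$ that rules out borderline losses.
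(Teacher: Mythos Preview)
Your proposal is correct and follows essentially the same route as the paper: pass to the repetition $Rv$, invoke Silvestre's H\"older estimates for $(-\Delta_{\Realn})^\sigma$ on $\Realn$, and transfer back via the identification $(-\Delta_{\Realn})^\sigma(Rv)=R\bigl((-\Delta_{\Toron})^\sigma v\bigr)$ supplied by Theorem~\ref{thm:puntual-toro} (and Theorem~\ref{thm:transferencia}). The paper compresses this into a single displayed chain of (in)equalities, while you spell out the two norm-comparison steps between $\Toron$ and $\Realn$ more carefully; in fact those comparisons are equalities, so no constant $C_n$ is needed there.
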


\begin{proof}
For (1), by Theorem \ref{thm:puntual-toro} and \cite[Proposition 2.5]{Silvestre} we readily get,
$$\|(-\Delta_{\Toron})^{\sigma}v\|_{ C^{0,\alpha-2\sigma}(\Toron)} = \|(-\Delta_{\Toron})^{\sigma}(Rv)\|_{ C^{0,\alpha-2\sigma}(\Realn)}
\le C\|Rv\|_{C^{0,\alpha}(\Realn)}=
C\|v\|_{C^{0,\alpha}(\Toron)}.$$
Parts (2), (3) and (4) follow analogously by using Theorem \ref{thm:transferencia},
 Theorem \ref{thm:puntual-toro} and the known results for $\Realn$ \cite[Proposition~2.6,
 ~Proposition~2.7]{Silvestre}.
\end{proof}

\noindent\textbf{Acknowledgement.} We thank Luis Caffarelli and Jos\'e L. Torrea for delightful
and pleasant discussions about the results of this work.



\end{document}